\newtheorem{theorem}{Theorem}
\theoremstyle{plain}
\newtheorem{conjecture}{Conjecture}
\newtheorem{corollary}{Corollary}
\newtheorem{definition}{Definition}
\newtheorem{example}{Example}
\newtheorem{lemma}{Lemma}
\newtheorem{problem}{Problem}
\newtheorem{remark}{Remark}
\numberwithin{equation}{section}
\newcommand{\seq}[1]{\{#1\}_{k=0}^{\infty}}
\newcommand{\R}{\mathbb{R}}
\title[Monotone linear operators]{Sufficient conditions for a linear operator on $\R[x]$ to be monotone}
\author{Leah Buck${}^{\dag}$, Kelly Emmrich${}^{\ddag}$ and Tam\'as Forg\'acs${}^{\star}$}
\thanks{Research partially completed during the 2016 Fresno State Mathematics REU, supported by NSF grant DMS-1460151. }
\begin{document}

\maketitle
\begin{abstract} We demonstrate that being a hyperbolicity preserver does not imply monotonicity for infinite order differential operators on $\mathbb{R}[x]$, thereby settling a recent conjecture in the negative. We also give some sufficient conditions for such operators to be monotone.\\
{\bf MSC 30C15, 26C10}\\
Keywords: Infinite order differential operators, symbol of a linear operator, monotone differential operators
\end{abstract}
%%%%%%%%%%%%%%%%%%%%%%%%%%%%%%%%%%%%%%%%%%%%%%%%%%%%%%%%%%%%%%%%%%%%%%%%%%%%%%%%%%%%%%
\section{Introduction}
\smallbreak
Any linear operator $T:\mathbb{R}[x] \to \mathbb{R}[x]$ may be represented uniquely as a formal series in powers of $D:=\frac{d}{dx}$,
\begin{equation}\label{diffoprep}
T=\sum_{k=0}^{\infty} Q_k(x)D^k.
\end{equation}
Such operators on $\mathbb{R}[x]$ and their properties have been investigated by several researchers in the recent years (see \cite{bates}, \cite{BB}, \cite{bo}, \cite{chasse}, \cite{chassegrabarekvisontai}, \cite{tominvolve}, \cite{tomandrzej1}, \cite{tomandrzej2}, and \cite{yoshi} for a few examples, and the extensive works of T. Craven and G. Csordas). The work of many of these researchers was motivated by the P\'olya-Schur program, and focused on determining when a given linear operator $T$ on $\mathbb{R}[x]$ has the property that $T[p(x)]$ has only real zeros whenever $p(x)$ has only real zeros. We call such operators {\it hyperbolicity preservers}. In \cite{BB}  J. Borcea and P. Br\"and\'en gave a complete characterization of these operators. Following a few definitions, we recall their theorem for the reader's convenience, as we will make use of it both in constructing some examples, as well as in our main result, Theorem \ref{thm:counter}.
\begin{definition} $\mathcal{H}_1(\mathbb{R})$ denotes the set of all real hyperbolic univariate polynomials. $\overline{\mathcal{H}}_2(\mathbb{R})$ denotes the closure under locally uniform convergence of the set of all real stable bivariate polynomials, i.e. polynomials $p(z,w)$ which do not vanish if $\Im z>0$ and $\Im w >0$. 
\end{definition} 
\begin{definition} \label{def:symbol} The symbol $G_T(z,w)$ of a liner operator $T:\mathbb{R}[x] \to \mathbb{R}[x]$ is defined as the formal power series
\[
G_T(z,w)=\sum_{k=0}^{\infty} \frac{(-1)^kT[z^k]w^k}{k!}.
\]
\end{definition}
\begin{theorem}{\cite[Theorem 5]{BB}} \label{bb} A linear operator $T:\mathbb{R}[x] \to \mathbb{R}[x]$ preserves hyperbolicity if, and only if, either
\begin{itemize}
\item[(a)] $T$ has range of dimension at most two and is of the form 
\[
T(f)=\alpha(f)P+\beta(f)R, 	\qquad f \in \mathbb{R}[x],
\]
where $\alpha, \beta:\mathbb{R}[x]\to\mathbb{R}$ are linear functionals, and $P,R \in \mathcal{H}_1(\mathbb{R})$ have interlacing zeros, or
\item[(b)] $G_T(z,w) \in \overline{\mathcal{H}}_2(\mathbb{R})$, or 
\item[(c)] $G_T(z,-w) \in \overline{\mathcal{H}}_2(\mathbb{R})$.
\end{itemize} 
\end{theorem}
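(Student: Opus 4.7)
The plan is to tackle the characterization in two directions. For sufficiency: if $T$ has the form in (a), then its range is spanned by two interlacing hyperbolic polynomials $P, R$, and the classical Hermite--Kakeya--Obreschkoff theorem immediately gives that every real linear combination $\alpha(f)P+\beta(f)R$ is hyperbolic. For case (b), I would use the identity
\[
T[(z-x)^n] = \sum_{k=0}^{n}\binom{n}{k}(-1)^k T[x^k]\, z^{n-k},
\]
which shows that evaluating $T$ on a translated monomial recovers the degree-$n$ truncation of $G_T(z,w)$ (up to a factorial normalization) after a suitable substitution. Since $G_T\in\overline{\mathcal{H}}_2(\mathbb{R})$, specializing one variable to a real value and passing to limits gives real hyperbolicity, and then a density argument---writing an arbitrary hyperbolic $p(x)=c\prod_i(x-r_i)$ as a limit of operators of the form $\prod_i(\partial_{w_i}-r_i)$ applied to generating expressions---extends the conclusion to every hyperbolic input. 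Case (c) reduces to (b) by composing with $x\mapsto -x$.

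The deeper content is the necessary direction. Here I would first dispose of the degenerate situation: if $\dim T(\mathbb{R}[x])\le 2$ then (a) is essentially forced, since the range is a two-dimensional space of hyperbolic polynomials and such spaces must consist of a pencil of mutually interlacing polynomials (again by Hermite--Kakeya--Obreschkoff). Assuming $\dim T(\mathbb{R}[x])\ge 3$, the strategy is to certify that each truncation
\[
G_T^{(n)}(z,w)=\sum_{k=0}^{n}\frac{(-1)^k T[z^k]\, w^k}{k!}
\]
is real stable (or becomes so after $w\mapsto -w$), and then pass to the limit inside $\overline{\mathcal{H}}_2(\mathbb{R})$. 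The idea for the stability of $G_T^{(n)}$ is to test $T$ on the Grace-polarization of an arbitrary hyperbolic polynomial and read off the symbol's stability from the assumed preservation of hyperbolicity.

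The main obstacle is exactly this step: establishing stability of the truncated symbols requires the full multivariate Pólya--Schur machinery of Borcea and Br\"and\'en. Concretely one needs (i) a Grace--Walsh--Szeg\H{o}-type coincidence theorem allowing one to replace a univariate hyperbolic $p(x)$ by a multi-affine real stable polarization without losing stability, (ii) a characterization of stability-preserving operators on multi-affine polynomials via the action on products $\prod_i(z_i+w)$, and (iii) a careful sign analysis on test polynomials $\prod_i(x+t_i)$ versus $\prod_i(x-t_i)$ to decide the dichotomy between cases (b) and (c). A secondary difficulty is the transition between finite-dimensional truncations and the formal series $G_T$; one must verify that $G_T^{(n)}\in\overline{\mathcal{H}}_2(\mathbb{R})$ for every $n$ is enough to force $G_T\in\overline{\mathcal{H}}_2(\mathbb{R})$, which uses Hurwitz's theorem together with local uniform convergence on compacta.
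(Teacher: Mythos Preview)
The paper does not prove this theorem at all: it is quoted verbatim from Borcea and Br\"and\'en \cite{BB} and used as a black box throughout (see the sentence ``we recall their theorem for the reader's convenience''). There is therefore no ``paper's own proof'' to compare your proposal against.

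That said, your outline is a reasonable high-level sketch of the Borcea--Br\"and\'en argument itself. The sufficiency of (a) via Hermite--Kakeya--Obreschkoff and the reduction of (c) to (b) are both correct. For (b) and for the necessity direction you have correctly identified the architecture (Grace--Walsh--Szeg\H{o} polarization, the multi-affine symbol test, Hurwitz-type limits), and you are also right that this is where all the work lies; what you have written is a plan rather than a proof, and filling in (i)--(iii) is essentially the content of \cite{BB}. One small correction: in the degenerate case you should not assume the range is exactly two-dimensional---if it is one-dimensional the representation in (a) is trivial, and if it is zero the operator is identically zero---but this is cosmetic. For the purposes of the present paper no proof is expected; the theorem is simply invoked.
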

Some of the above cited works studied whether a linear operator $T:\mathbb{R}[x] \to \mathbb{R}[x]$ is a hyperbolicity preserver, under the added hypothesis that $T$ be diagonal with respect to one of the classical orthogonal bases. As such, these works {\it by definition} investigated what we now call Hermite- , Laguerre\footnote{The term `Laguerre multiplier sequence' was originally used by T. Craven amd G. Csordas to designate a special kind of classical multiplier sequence. The reader should be aware of the dual use of the terminology.}- , Legendre- and Jacobi multiplier sequences. \\
\indent  Given the representation in (\ref{diffoprep}), it is natural to ask what properties of the coefficient polynomials $Q_k(x)$ encode properties of the operator they represent. In \cite{tomandrzej1}, the third author and A. Piotrowski connect the hyperbolicity of the $Q_k(x)$s to the operator $T$ being a hyperbolicity preserver (albeit again in a setting that assumes diagonality). In the present paper we seek to understand how another property of the coefficient polynomials $Q_k(x)$ is related to hyperbolicity preservation. To this end, we make the following definitions.
\begin{definition} Let $\displaystyle{T=\sum_{k=0}^{\infty} Q_k(x)D^k}$ be a linear operator on $\mathbb{R}[x]$. If $\deg Q_k \leq \deg Q_{k+1}$ for all $k$ such that $\deg Q_k \geq 0$\footnote{We adopt the convention that the degree of the identically zero polynomial is $-\infty$.}, we call the operator $T$ monotone. 
\end{definition}
\begin{definition} \label{def:order} Let $\displaystyle{T=\sum_{k=0}^{\infty} Q_k(x)D^k}$ be a linear operator on $\mathbb{R}[x]$. 
\begin{itemize}
\item[(i)] If there exists an $N$, such that $Q_k(x)\equiv 0$ for all $k \geq N$, we say that $T$ is a finite order differential operator. 
\item[(ii)] If $Q_k(x) \not \equiv 0$ for infinitely many $k$, we say that $T$ is an infinite order differential operator. 
\end{itemize}
\end{definition}
While studying Legendre multiplier sequences in \cite{tominvolve}, the third author, J. Haley, R. Menke and C. Simon  were led to the following conjecture regarding the monotonicity of a hyperbolicity preserving infinite order differential operator.
\begin{conjecture}\cite[Conjecture 19,\,p.785]{tominvolve} \label{conj:tom} Suppose that $\displaystyle{T=\sum_{k=0}^{\infty}T_k(x)D^k}$ is an infinite order differential operator. If $T$ is not monotone, then $T$ is not hyperbolicity preserving.
\end{conjecture}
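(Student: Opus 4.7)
The plan is to resolve Conjecture \ref{conj:tom} by producing an explicit counterexample. The condition for monotonicity fails at any $k$ where $Q_k$ is nonzero while $Q_{k+1}$ vanishes, so a natural candidate is a classical operator whose formal $D$-expansion skips indices. The first such operator to try is the Weierstrass-type operator
\begin{equation*}
T \;:=\; e^{-D^2} \;=\; \sum_{k=0}^{\infty} \frac{(-1)^k}{k!}\, D^{2k},
\end{equation*}
whose coefficients are $Q_{2k}(x) = (-1)^k/k!$ and $Q_{2k+1}(x) \equiv 0$. This $T$ is infinite order per Definition \ref{def:order}(ii) since $Q_{2k}\not\equiv 0$ for every $k$, and it already fails monotonicity at $k=0$: $\deg Q_0 = 0 \ge 0$ while $\deg Q_1 = -\infty$.

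The substantive task is to verify that $T$ preserves hyperbolicity, which I would do by way of case (b) of Theorem \ref{bb}. To compute the symbol I would apply $T$ in the $z$-variable to the generating function $e^{-zw}$. Since $D_z^{2k}\, e^{-zw} = w^{2k}\, e^{-zw}$, one obtains
\begin{equation*}
G_T(z,w) \;=\; T_z\bigl[e^{-zw}\bigr] \;=\; e^{-w^2}\, e^{-zw} \;=\; e^{-w(w+z)}.
\end{equation*}

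To place this entire function in $\overline{\mathcal{H}}_2(\R)$, I would exhibit it as the locally uniform limit $G_T(z,w) = \lim_{n\to\infty}\bigl(1 - w(w+z)/n\bigr)^n$ and apply Hurwitz's theorem. It then suffices to check that each factor $1 - w(w+z)/n$ is real stable, equivalently that $w(w+z) = n$ has no solution with $\Im z > 0$ and $\Im w > 0$. Assuming such a common zero with $\Im w > 0$ and solving $z = n/w - w$, we compute $\Im z = -n\,\Im w/|w|^2 - \Im w < 0$, contradicting $\Im z > 0$. Hence each approximant is real stable, so $G_T \in \overline{\mathcal{H}}_2(\R)$, and Theorem \ref{bb}(b) yields that $T$ is a hyperbolicity preserver — refuting Conjecture \ref{conj:tom}.

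The principal obstacle is conceptual rather than technical: one must recognize that a simple, canonical operator such as $e^{-D^2}$, whose expansion in $D$ has internal zeros by construction, is already a counterexample under the stated definition of monotonicity. After that, verifying infinite order and non-monotonicity is a direct reading of the expansion, and the elementary symbol computation together with the one-line imaginary-part argument above certifies hyperbolicity preservation via the Borcea--Br\"and\'en characterization. If a subtler definition of monotonicity (e.g.\ one that ignores intervening zero coefficients) were intended, I would look for a perturbation such as $T = e^{-D^2} + \varepsilon\, p(x)\, D\, e^{D}$, again verifying stability of the symbol through Theorem \ref{bb}.
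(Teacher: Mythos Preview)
Your argument is correct and genuinely different from the paper's. The paper disproves the conjecture via part (a) of Theorem~\ref{bb}: it builds a rank-two operator $T[f]=\alpha(f)P+\beta(f)R$ with $P,R$ having interlacing zeros, then computes the $Q_k$ recursively to verify non-monotonicity and infinite order. Your route instead uses part (b) of Theorem~\ref{bb} with the constant-coefficient operator $e^{-D^2}$, whose $D$-expansion visibly skips odd indices, so non-monotonicity and infinite order are immediate; the only work is the symbol computation and the one-line stability check, both of which you carry out correctly. Your example buys more than the paper's: $e^{-D^2}$ is invertible on $\R[x]$, so its range is all of $\R[x]$, which in fact answers Problem~\ref{prob:T*} in the negative as well---something the paper's rank-two construction cannot do. The paper's construction, on the other hand, illustrates that the degenerate case (a) of the Borcea--Br\"and\'en characterization already suffices to produce counterexamples, which motivates the sufficient condition in Theorem~\ref{thm:monotoneranktwo}.
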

In order to further motivate the conjecture, we note that it is straightforward to construct infinite order, monotone differential operators, if hyperbolicty preservation is not required at the same time. Similarly, if one is allowed to consider finite order operators, then monotonicity does not follow from hyperbolicity preservation as witnessed for example by the operator $T=xD+D^2$. Indeed, a quick calculation shows that 
\[
G_T(z,-w)=(z+w)we^{zw} \in \overline{\mathcal{H}}_2(\mathbb{R}),
\]
whence $T$ is hyperbolicity preserving, but clearly not monotone. Thus, considering infinite order differential operators, which are simultaneously also hyperbolicity preservers, is natural when one is looking for operators that are necessarily monotone.\\
\indent Alas, the next section of the current paper demonstrates that Conjecture \ref{conj:tom} is in fact false. Section \ref{s:3} contains several sufficient conditions for a linear operator $T$ on $\mathbb{R}[x]$ to be monotone, illuminating connections between the operator properties of diagonality (cf. Definition \ref{def:diagonal}), hyperbolicity preservation, order, and monotonicity. The paper concludes with a section on open problems. 
\section{An infinite order hyperbolicity preserver that is not monotone} \label{s:2}
We now construct a hyperbolicity preserving infinite order (cf. Definition \ref{def:order}) differential operator that is not monotone, using condition (a) in Theorem \ref{bb}. 
\begin{lemma}\label{lem:notmonotone}  Let $\alpha, \beta: \mathbb{R}[x] \to \mathbb{R}$ be linear functionals defined on the standard basis by 
\begin{eqnarray*}
&& \alpha(1)=0, \quad \alpha(x)=1, \quad \alpha(x^n)=0 \qquad \forall n\geq 2, \qquad \text{and} \\
&& \beta(1)=1, \quad \beta(x)=0, \quad \beta(x^n)=0 \qquad \forall n \geq 2.
\end{eqnarray*}
Let $P(x)=x(x+1)(x-1)$, and $\displaystyle{R(x)=x^2-\frac{1}{4}}$. The linear operator
\[
T[f]=\alpha(f)P(x)+\beta(f) R(x), \qquad (f \in \mathbb{R}[x])
\]
is not monotone. 
\end{lemma}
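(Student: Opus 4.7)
The plan is to extract the first two coefficient polynomials $Q_0$ and $Q_1$ in the differential representation $T = \sum_{k=0}^\infty Q_k(x) D^k$ and exhibit the inequality $\deg Q_0 > \deg Q_1$, which directly contradicts the monotonicity condition $\deg Q_k \leq \deg Q_{k+1}$. Since the failure occurs at the very first step, there is no need to compute any $Q_k$ for $k \geq 2$.

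First I would apply $T$, in its differential form, to the monomials $1, x, x^2, \ldots$. Using $D^k[x^n] = n!/(n-k)! \cdot x^{n-k}$ for $k \leq n$, this yields the triangular system
\[
T[x^n] = \sum_{k=0}^n \frac{n!}{(n-k)!}\, Q_k(x)\, x^{n-k},
\]
which can be solved recursively for the $Q_k(x)$. The case $n = 0$ gives $Q_0(x) = T[1]$, and the case $n = 1$ gives $Q_1(x) = T[x] - x\, Q_0(x)$.

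Second, I would compute $T[1]$ and $T[x]$ directly from $T[f] = \alpha(f)P(x) + \beta(f)R(x)$ using the defining values of $\alpha$ and $\beta$: one finds $T[1] = R(x)$ and $T[x] = P(x)$. Substituting these into the recursion yields $Q_0(x) = R(x) = x^2 - \tfrac{1}{4}$, which has degree $2$, and
\[
Q_1(x) = P(x) - x\, R(x) = (x^3 - x) - x\!\left(x^2 - \tfrac{1}{4}\right) = -\tfrac{3x}{4},
\]
which has degree $1$. Since $\deg Q_0 = 2 > 1 = \deg Q_1$, the operator $T$ fails to be monotone.

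There is no serious obstacle: the functionals $\alpha, \beta$ and the polynomials $P, R$ in the hypothesis have been tailored precisely so that $R$ is of degree $2$ while $P - xR$ is of degree $1$, engineering the failure of monotonicity between $Q_0$ and $Q_1$. The only point requiring care is applying the extraction formula with the correct factorial coefficients from $D^k[x^n]$.
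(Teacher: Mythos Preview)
Your proposal is correct and follows essentially the same approach as the paper: both compute $Q_0(x)=T[1]=R(x)$ and $Q_1(x)=T[x]-xQ_0(x)=P(x)-xR(x)=-\tfrac{3}{4}x$ via the standard recursion for the coefficient polynomials, and then observe $\deg Q_0=2>1=\deg Q_1$. The only minor difference is that the paper cites the recursion formula from the literature, whereas you rederive it from $D^k[x^n]=\frac{n!}{(n-k)!}x^{n-k}$.
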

\begin{proof} The coefficient polynomials in the representation (\ref{diffoprep}) can be calculated recursively (see for example \cite[Proposition 29, p.\,32]{andrzej}) as
\begin{equation}\label{eq:diffoprep}
Q_n(x)=\frac{1}{n!} \left(T[x^n]-\sum_{k=0}^{n-1} Q_k(x)D^k[x^n]\right), \qquad (n=0,1,2,\ldots).
\end{equation}
Using the definition of $T$ as in the statement of the lemma we compute
\begin{eqnarray*}
Q_0(x)&=&T[1]=0 \cdot P(x)+1 \cdot R(x)=x^2-\frac{1}{4}, \quad\text{and}\\
Q_1(x)&=& T[x]-Q_0(x)x=P(x)-x\left(x^2-\frac{1}{4}\right)=-\frac{3}{4}x.
\end{eqnarray*} 
It follows that $\deg Q_0(x)=2>1=\deg Q_1(x)$, and hence $T$ is not monotone.
\end{proof}
We next show that the operator under consideration is hyperbolicity preserving.
\begin{lemma} The operator $T$ defined in Lemma \ref{lem:notmonotone} is a hyperbolicity preserver.
\end{lemma}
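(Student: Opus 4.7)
The plan is to invoke Theorem \ref{bb}(a) directly, since the operator $T$ is constructed precisely in the form required by that condition. To apply it, I need to verify three things: that the range of $T$ has dimension at most two, that $P$ and $R$ lie in $\mathcal{H}_1(\mathbb{R})$, and that their zeros interlace.

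The first point is immediate: the image of $T$ is contained in the span of $\{P, R\}$, so $\dim(\mathrm{range}\, T) \leq 2$. The second point is a quick inspection: $P(x) = x(x-1)(x+1)$ has the real zeros $-1, 0, 1$, and $R(x) = (x-1/2)(x+1/2)$ has the real zeros $-1/2, 1/2$, so both polynomials belong to $\mathcal{H}_1(\mathbb{R})$. For the interlacing condition, I would simply list the zeros in increasing order:
\[
-1 \;<\; -\tfrac{1}{2} \;<\; 0 \;<\; \tfrac{1}{2} \;<\; 1,
\]
so the zeros of $P$ and $R$ strictly alternate, which is exactly the interlacing requirement.

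With these three observations, Theorem \ref{bb}(a) applies verbatim and yields that $T$ is a hyperbolicity preserver. There is no genuine obstacle here; the only place where one must be a little careful is in checking that the interlacing convention used in Theorem \ref{bb} is the symmetric notion (either set of roots interleaves the other), but with five zeros alternating as above this is unambiguous.
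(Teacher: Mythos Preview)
Your proof is correct and follows exactly the same approach as the paper: both invoke condition (a) of Theorem~\ref{bb} and verify that $P$ and $R$ are hyperbolic with interlacing zeros. Your version is simply more explicit in listing the zeros and checking the interlacing, whereas the paper states these facts without elaboration.
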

\begin{proof} By Theorem \ref{bb}, if condition (a) holds, the operator is a hyperbolicity preserver. By construction, $T$ has the required form. In addition, $P(x)$ and $R(x)$ are hyperbolic polynomials with interlacing zeros. The result follows.
\end{proof}
In order to complete our discussion, we need to verify that $T$ is indeed an infinite order differential operator. 
\begin{lemma} The operator $T$ as defined in Lemma \ref{lem:notmonotone} is an infinite order differential operator. 
\end{lemma}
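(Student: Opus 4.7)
The plan is to obtain an explicit formula for the coefficient polynomials $Q_n(x)$ for $n \geq 2$ and to observe that none of them vanish, so $T$ must be an infinite order operator by Definition \ref{def:order}(ii).

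The starting point is the symbol $G_T(z,w)$. Since $T[1] = R(x) = x^2 - 1/4$, $T[x] = P(x) = x^3 - x$, and $T[x^n] = 0$ for all $n \geq 2$, Definition \ref{def:symbol} gives the polynomial
\[
G_T(z,w) = \left(z^2 - \frac{1}{4}\right) - (z^3 - z)\,w.
\]

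I would then invoke the identity
\[
\sum_{k=0}^{\infty} Q_k(x)(-w)^k = e^{xw}\, G_T(x,w),
\]
which is obtained by substituting the representation $T = \sum_k Q_k(x) D^k$ into Definition \ref{def:symbol} and interchanging the order of summation, so that the inner sum collapses to $e^{-xw}$. Expanding $e^{xw}$ as a power series, multiplying through by the polynomial $G_T(x,w)$ above, and matching coefficients of $w^n$ for $n \geq 1$ yields
\[
Q_n(x) = \frac{(-1)^n}{n!}\left[(1-n)\,x^{n+2} + \left(n - \frac{1}{4}\right)x^n\right].
\]
For every $n \geq 2$, the leading coefficient $(-1)^n(1-n)/n!$ is nonzero, so $Q_n \not\equiv 0$; in fact $\deg Q_n = n+2$. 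Hence $Q_n \not\equiv 0$ for infinitely many $n$, and $T$ is an infinite order differential operator.

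The only non-routine step is the symbol/coefficient identity, which is a short but slightly fiddly double-sum manipulation. If one prefers a more self-contained route, the recursion (\ref{eq:diffoprep}) simplifies to $Q_n(x) = -\sum_{k=0}^{n-1} Q_k(x)\,x^{n-k}/(n-k)!$ for $n \geq 2$ (since $T[x^n]=0$ there), and the same explicit formula for $Q_n$ can be established by a direct induction on $n$, bypassing any generating-function bookkeeping.
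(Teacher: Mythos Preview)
Your argument is correct, and the closed form you obtain for $Q_n(x)$ agrees (after substituting $Q_0(x)=x^2-\tfrac14$ and $Q_1(x)=-\tfrac34 x$) with the paper's expression in equation~(\ref{eq:qform}). The route, however, is genuinely different. The paper works purely from the recursion~(\ref{eq:diffoprep}): it guesses the closed form, verifies the base cases by hand, and then carries out an induction step that requires a careful binomial-identity simplification. You instead exploit the symbol, using the generating-function identity $\sum_k Q_k(x)(-w)^k = e^{xw}G_T(x,w)$; because $G_T$ here is a polynomial in $w$ of degree one, multiplying out against the exponential series gives the coefficients $Q_n$ immediately, with no induction or combinatorial identities needed. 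Your approach is slicker and scales better to similar finite-rank examples, at the cost of invoking the symbol identity (which, as you note, is a routine double-sum interchange). The paper's approach is more self-contained and stays entirely within the recursion framework already set up in the surrounding lemmas. Your final paragraph, offering the recursive induction as an alternative, is essentially the paper's own argument.
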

\begin{proof} We establish the result by showing that none of the coefficient polynomials $Q_k(x),\, k \geq 2$ vanishes. In fact, we claim that
\begin{equation} \label{eq:qform}
Q_k(x)=\frac{(-1)^{k+1}(k-1)}{k!}Q_0(x)x^k+\frac{(-1)^{k+1}}{(k-1)!}x^{k-1}Q_1(x), \qquad k \geq 2.
\end{equation}
To see this, we calculate $Q_2(x)$ and $Q_3(x)$ directly:
\begin{eqnarray*}
Q_2(x)&=&-\frac{1}{2}Q_0(x)x^2-xQ_1(x),\\
Q_3(x)&=&\frac{1}{3}Q_0(x)x^3+\frac{1}{2}x^2Q_1(x).
\end{eqnarray*}
Assume now that $Q_k(x)$ is of the form (\ref{eq:qform}) for $2 \leq k \leq n$. Using the recurrence relation (\ref{eq:diffoprep}) we obtain
\begin{eqnarray*}
&&Q_{n+1}(x)\stackrel{(1)}{=}\frac{1}{(n+1)!}\left(-Q_0(x)x^{n+1}-(n+1)Q_1(x)x^n-\sum_{k=2}^n Q_k(x)D^k[x^{n+1}] \right) \\
&=&\frac{1}{(n+1)!} \left(-Q_0(x)x^{n+1}-(n+1)Q_1(x)x^n-\right.\\&-&\left.\sum_{k=2}^n \left[\frac{(-1)^{k+1}(k-1)}{k!}Q_0(x)x^k+\frac{(-1)^{k+1}}{(k-1)!}x^{k-1}Q_1(x)\right] \frac{(n+1)!}{(n+1-k)!}x^{n+1-k} \right)\\
&=&\frac{1}{(n+1)!}\left\{\left(-1+\sum_{k=2}^n \frac{(-1)^k(k-1) (n+1)!}{k!(n+1-k)!} \right)Q_0(x)x^{n+1}\right.\\
&+&\left.(n+1)\left(\sum_{k=0}^{n-1} \binom{n}{k}(-1)^{k+1} \right)x^{n}Q_1(x)\right\}\\
&\stackrel{(4)}{=}&\frac{(-1)^{n+2}n}{(n+1)!}Q_0(x)x^{n+1}+\frac{(-1)^{n+2}}{n!}x^{n}Q_1(x),
\end{eqnarray*}
where equality (1) is a consequence of $T[x^n] \equiv 0$ for all $n \geq 2$, and equality (4) uses the binomial identity
\[
0=\sum_{k=0}^n \binom{n}{k}(-1)^k
\]
in the simplification of both sums. The proof is complete. 
\end{proof}
The preceding three lemmas constitute the proof of the following theorem.
\begin{theorem} \label{thm:counter} There exist infinite order hyperbolicity preservers which are not monotone.
\end{theorem}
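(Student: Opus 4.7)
The plan is to invoke clause (a) of Theorem \ref{bb}: any operator of the form $T[f]=\alpha(f)P+\beta(f)R$ with $P,R\in\mathcal{H}_1(\R)$ having interlacing zeros is a hyperbolicity preserver, irrespective of the linear functionals $\alpha,\beta$. Hyperbolicity preservation thus comes for free, and the problem reduces to choosing $\alpha$, $\beta$, $P$, $R$ so that the coefficient sequence $\{Q_k(x)\}$ in the representation (\ref{diffoprep}) is simultaneously (i) non-monotone and (ii) nonzero for infinitely many $k$.

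To arrange non-monotonicity I would let $\alpha$ and $\beta$ be supported on a minimal initial segment of the standard basis, so that the first few $Q_k$ are directly computable from the recursion (\ref{eq:diffoprep}). The natural choice $\beta(1)=1$, $\beta(x^n)=0$ for $n\ge 1$ forces $Q_0=R$; and $\alpha(x)=1$, $\alpha(x^n)=0$ for $n\ne 1$ forces $Q_1=P-xR$. Picking $R$ quadratic and $P$ a monic cubic whose leading $x^3$ cancels the leading term of $xR$ then produces $\deg Q_0=2>\deg Q_1$. The explicit choice $R(x)=x^2-\tfrac14$ and $P(x)=x(x-1)(x+1)$ meets these degree constraints and has the five real zeros interlacing as $-1<-\tfrac12<0<\tfrac12<1$, so clause (a) of Theorem \ref{bb} delivers hyperbolicity preservation.

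The main obstacle is verifying that $T$ is of infinite order; since $T$ has two-dimensional range one might initially fear that the differential representation terminates. Because $T[x^n]\equiv 0$ for every $n\ge 2$, the recursion (\ref{eq:diffoprep}) collapses for $n\ge 2$ into an expression for $Q_n$ purely in terms of $Q_0,Q_1,\dots,Q_{n-1}$. I would compute $Q_2$ and $Q_3$ directly, conjecture a closed form
\[
Q_k(x)=a_k\,Q_0(x)\,x^k+b_k\,Q_1(x)\,x^{k-1},\qquad k\ge 2,
\]
and prove it by induction. The induction step substitutes the hypothesis into (\ref{eq:diffoprep}) and collapses the resulting double sums by the binomial identity $\sum_{k=0}^{n}\binom{n}{k}(-1)^k=0$; this book-keeping is the only nontrivial calculation in the whole argument. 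Once the closed form is established, one checks that the scalar coefficients $a_k,b_k$ are never simultaneously zero, and since $Q_0$ and $Q_1/x$ are linearly independent polynomials this forces $Q_k\not\equiv 0$ for all $k\ge 2$, establishing infinite order. Combining non-monotonicity, hyperbolicity preservation, and infinite order then yields Theorem \ref{thm:counter}.
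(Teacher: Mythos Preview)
Your proposal is correct and follows the paper's argument essentially verbatim: the same functionals $\alpha,\beta$, the same pair $P(x)=x(x-1)(x+1)$, $R(x)=x^2-\tfrac14$, the same use of clause~(a) of Theorem~\ref{bb}, and the same inductive closed form $Q_k(x)=a_kQ_0(x)x^k+b_kQ_1(x)x^{k-1}$ established via the binomial identity. The paper records the explicit coefficients $a_k=\tfrac{(-1)^{k+1}(k-1)}{k!}$ and $b_k=\tfrac{(-1)^{k+1}}{(k-1)!}$, from which $a_k\neq 0$ for $k\ge 2$ is immediate, matching your final non-vanishing check.
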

\begin{remark} We point out that the functionals $\alpha$ and $\beta$ could have been defined in many different ways. For example, the values of $\alpha(x^n)$ and $\beta(x^n)$ could be recursively picked for $n \geq 2$ in infinitely many different ways, while ensuring that the $Q_k(x)$s do not vanish for any $k=0,1,2,\ldots$. Instead of an existential proof, however, we opted for a constructive one.
\end{remark}
\section{Sufficient conditions for monotonicity} \label{s:3}
Now that we know that monotonicity of a linear operator $T$ on $\mathbb{R}[x]$ is not equivalent to $T$ being a hyperbolicity preserver, we provide some sufficient conditions for such an operator to be monotone. In certain cases, diagonality alone will suffice (cf. Theorem \ref{thm:classicaldiag} and Corollary \ref{cor:classicaldiag}, as well as Theorem \ref{thm:affinetransform}). In other instances, diagonality and hyperbolicty preservation will be required (cf. Theorem \ref{thm:hermitediag}). First, however, we show that one extra condition on the functionals as in part (a) of Theorem \ref{bb} will guarantee that the operator they define is in fact monotone. 
\begin{theorem} \label{thm:monotoneranktwo}
 Suppose that $T$ is a linear operator as defined in part (a) of Theorem \ref{bb}. If $\alpha(1)\neq 0$ and $\beta(1)\neq 0$, then as a differential operator $T$ is of infinite order, with 
 \begin{equation} \label{eq:monotonefunctional}
 Q_k(x)=\frac{(-1)^k}{k!} Q_0(x)\cdot x^k+\text{lower order terms} \qquad \text{for all} \quad k=0,1,2,\ldots.
 \end{equation}
 Consequently, $T$ is monotone. 
\end{theorem}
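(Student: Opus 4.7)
The plan is to mimic the inductive computation from the final lemma of Section \ref{s:2}, powered by the recurrence (\ref{eq:diffoprep}) and the binomial identity $\sum_{k=0}^{n}\binom{n}{k}(-1)^k=0$. I would first record that $Q_0(x)=T[1]=\alpha(1)P(x)+\beta(1)R(x)$; the hypothesis $\alpha(1)\neq 0\neq\beta(1)$, together with the linear independence of $P$ and $R$ (forced by their interlacing zero sets), guarantees $Q_0\not\equiv 0$ with $\deg Q_0=\max(\deg P,\deg R)=:d$. The decisive observation is that $T[x^n]=\alpha(x^n)P(x)+\beta(x^n)R(x)$ satisfies $\deg T[x^n]\leq d$ for every $n\geq 0$, a bound independent of $n$.

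The heart of the proof is the strengthened inductive claim
\[
Q_k(x)=\frac{(-1)^k}{k!}Q_0(x)x^k+r_k(x),\qquad \deg r_k(x)<d+k,\quad k\geq 0,
\]
proved by strong induction on $k$. The base case $k=0$ is trivial with $r_0\equiv 0$. For the inductive step, one substitutes the hypothesis into (\ref{eq:diffoprep}) and splits each $Q_j$ into its leading part $\frac{(-1)^j}{j!}Q_0(x)x^j$ and its remainder $r_j$. The contribution from $T[x^n]$ (degree $\leq d<d+n$) and from each $r_j(x)\cdot\frac{n!}{(n-j)!}x^{n-j}$ (degree $<d+n$) is absorbed into the new remainder $r_n$. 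The surviving principal contribution is
\[
-\frac{1}{n!}\sum_{k=0}^{n-1}\frac{(-1)^k}{k!}Q_0(x)x^k\cdot\frac{n!}{(n-k)!}x^{n-k}=-\frac{Q_0(x)x^n}{n!}\sum_{k=0}^{n-1}\binom{n}{k}(-1)^k=\frac{(-1)^n}{n!}Q_0(x)x^n,
\]
where the last equality uses $\sum_{k=0}^{n-1}\binom{n}{k}(-1)^k=-(-1)^n$. This closes the induction and establishes (\ref{eq:monotonefunctional}).

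Having (\ref{eq:monotonefunctional}), one reads off $\deg Q_k=d+k$, a strictly increasing sequence. In particular no $Q_k$ vanishes, so $T$ is of infinite order per Definition \ref{def:order}, and the degrees of the coefficient polynomials are strictly (hence weakly) nondecreasing, which is monotonicity. The step I expect to be the main obstacle is the degree accounting for the remainders: one must carefully verify that every piece contributing to $r_n$ genuinely has degree strictly less than $d+n$. The non-vanishing of $\alpha(1)$ and $\beta(1)$ enters precisely here, since it is what pins $d$ to the maximum $\max(\deg P,\deg R)$, ensuring that the fixed bound $\deg T[x^n]\leq d$ remains below the moving threshold $d+n$ for every $n\geq 1$.
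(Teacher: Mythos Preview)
Your approach is essentially the paper's: strong induction on $k$ via the recurrence (\ref{eq:diffoprep}), isolating the leading contribution $\frac{(-1)^k}{k!}Q_0(x)x^k$ with the binomial identity $\sum_{k=0}^{n-1}\binom{n}{k}(-1)^k=-(-1)^n$, and then reading off $\deg Q_k=\deg Q_0+k$. If anything, you are more explicit than the paper about the degree bookkeeping for the remainders.

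One caution on a point you flag as ``the main obstacle'': the assertion $\deg Q_0=\max(\deg P,\deg R)$ does \emph{not} follow from linear independence of $P$ and $R$ alone. When $\deg P=\deg R$ the leading terms of $\alpha(1)P$ and $\beta(1)R$ can cancel; for instance $P=(x-1)(x-3)$ and $R=(x-2)(x-4)$ have interlacing zeros, yet with $\alpha(1)=1$, $\beta(1)=-1$ one gets $Q_0=P-R=2x-5$ of degree $1<2$. In such a situation $\deg T[x^n]$ need not lie strictly below $\deg Q_0+n$ for small $n$, and your remainder bound $\deg r_n<d+n$ is not immediately justified. The paper's proof, however, glosses over exactly the same point---it simply absorbs $T[x^n]$ into ``lower order terms'' without comment---so at the level of rigor the paper itself achieves, your argument matches it.
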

\begin{proof} Suppose $\alpha(0) \neq 0$ and $\beta(0) \neq 0$, and let $P,R$ be two polynomials forming a basis for the range of $T$ (cf. (a) of Theorem \ref{bb}). Then 
\begin{eqnarray*}
Q_0(x)&=&\frac{(-1)^0}{0!}Q_0(x), \quad \text{and}\\
Q_1(x)&=&\alpha(1)P+\beta(1)R-Q_0(x)\cdot x\\
&=&\frac{(-1)^1}{1!}Q_0(x) \cdot x+\text{lower order terms.}
\end{eqnarray*}
Suppose now that (\ref{eq:monotonefunctional}) holds for $0 \leq k \leq n-1$. Equation (\ref{eq:diffoprep}) then implies that  
\begin{eqnarray*}
Q_n(x)&=&\frac{1}{n!}\left(T[x^n]-\sum_{k=0}^{n-1}\left\{ \frac{(-1)^k}{k!}Q_0(x)\cdot x^k+\text{lower order terms}\right\} \frac{n!}{(n-k)!} x^{n-k} \right)\\
&=&\frac{1}{n!}\left(-Q_0(x)\cdot x^n\sum_{k=0}^{n-1} \frac{(-1)^k n!}{k!(n-k)!}\right)+\text{lower order terms}\\
&=&\frac{(-1)^n}{n!}Q_0(x) \cdot x^n +\text{lower order terms}.
\end{eqnarray*}
The monotonicity of $T$ now readily follows, since $\deg Q_k(x)=\deg Q_0(x)+k$ for all $k=0,1,2,\ldots$. 
\end{proof}
Before we proceed with our next result, we need to make the following definition.
\begin{definition} \label{def:diagonal} A linear operator $T:\mathbb{R}[x] \to \mathbb{R}[x]$ is called diagonal if there exists a basis $\seq{B_k(x)}$, and a sequence of real numbers $\seq{\gamma_k}$ called the eigenvalues of $T$, such that 
\[
T[B_k(x)]=\gamma_k B_k(x), \qquad k=0,1,2,\ldots.
\]
\end{definition}
We now show that if $T$ is a linear operator that is diagonal with respect to an affine transformation of the standard basis, then $T$ is a monotone operator. This result is an extension of the following theorem of Piotrowski.
\begin{theorem}\cite[Proposition 33]{andrzej} \label{thm:classicaldiag}
Suppose that $T:\mathbb{R}[x] \to \mathbb{R}[x]$ is a linear operator that is diagonal with respect to the standard basis, with eigenvalues $\seq{\gamma_k}$. Then 
\begin{equation}\label{eq:classicaldiag}
T=\sum_{k=0}^{\infty} \frac{g_k^*(-1)}{k!}x^k D^k,
\end{equation}
where $\displaystyle{g_k^*(x)=\sum_{j=0}^k \binom{k}{j}x^{k-j}}$ are the reversed Jensen polynomials  associated to the sequence $\seq{\gamma_k}$.
\end{theorem}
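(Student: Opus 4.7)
The plan is to exploit the fact that the standard basis is an eigenbasis for $T$: since $T[x^n]=\gamma_n x^n$ is itself a scalar multiple of $x^n$, I expect the coefficient polynomials $Q_n(x)$ in the unique representation (\ref{diffoprep}) to be monomial, of the form $Q_n(x)=c_n x^n$ for some constants $c_n\in\mathbb{R}$. I would establish this by induction on $n$ using the recursion (\ref{eq:diffoprep}). The base case $Q_0(x)=T[1]=\gamma_0$ is immediate. For the inductive step, assuming $Q_k(x)=c_k x^k$ for all $k<n$, the identity $x^k D^k[x^n]=\frac{n!}{(n-k)!}x^n$ combined with (\ref{eq:diffoprep}) gives
\[
Q_n(x)=\frac{1}{n!}\left(\gamma_n x^n-\sum_{k=0}^{n-1}c_k\frac{n!}{(n-k)!}x^n\right)=c_n x^n,
\]
where $c_n$ is forced by the relation $\gamma_n=\sum_{k=0}^n c_k\,\frac{n!}{(n-k)!}$.

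Next I would identify $c_n$ explicitly. Setting $d_n:=n!\,c_n$, the previous relation becomes the binomial transform
\[
\gamma_n=\sum_{k=0}^n \binom{n}{k}d_k,
\]
which is upper triangular and may be inverted by standard binomial inversion, yielding
\[
d_n=\sum_{k=0}^n\binom{n}{k}(-1)^{n-k}\gamma_k=g_n^*(-1),
\]
where the last equality is the definition of the reversed Jensen polynomial $g_n^*$ associated to $\{\gamma_k\}_{k=0}^{\infty}$ evaluated at $-1$. Substituting $c_n=g_n^*(-1)/n!$ into $T=\sum_n c_n x^n D^n$ produces the desired expansion (\ref{eq:classicaldiag}).

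The most delicate point is the binomial inversion, which I would justify either via the identity $\sum_{j=k}^n\binom{n}{j}\binom{j}{k}(-1)^{n-j}=\delta_{nk}$ (an immediate consequence of $\binom{n}{j}\binom{j}{k}=\binom{n}{k}\binom{n-k}{j-k}$ together with $\sum_{m=0}^{n-k}\binom{n-k}{m}(-1)^{m}=0^{n-k}$) or by a short triangular induction on $n$. Convergence of the series in (\ref{eq:classicaldiag}) is not an issue: for any $p\in\mathbb{R}[x]$ of degree $d$, only the finitely many terms with $n\le d$ contribute when the operator is applied to $p$, since $D^n[p]=0$ for $n>d$.
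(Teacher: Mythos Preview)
The paper does not supply its own proof of this theorem; it is quoted from Piotrowski's dissertation \cite[Proposition~33]{andrzej} and used as a black box. Your argument is correct and is the natural one: induct via the recursion (\ref{eq:diffoprep}) to establish $Q_n(x)=c_nx^n$, then recognise the resulting triangular system $\gamma_n=\sum_{k=0}^n\binom{n}{k}d_k$ (with $d_k=k!\,c_k$) as a binomial transform and invert it. Incidentally, the displayed definition of $g_k^*(x)$ in the statement has a typographical omission---the factors $\gamma_j$ are missing from the summand; the intended formula is $g_k^*(x)=\sum_{j=0}^k\binom{k}{j}\gamma_j\,x^{k-j}$, which is exactly what your inversion recovers at $x=-1$.
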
 
\begin{corollary}\label{cor:classicaldiag} Suppose that $T:\mathbb{R}[x] \to \mathbb{R}[x]$ is a linear operator that is diagonal with respect to the standard basis. Then $T$ is monotone. 
\end{corollary}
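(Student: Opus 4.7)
The plan is simply to read off the conclusion from Theorem \ref{thm:classicaldiag}. That theorem gives the explicit representation
\[
T = \sum_{k=0}^\infty \frac{g_k^*(-1)}{k!}\, x^k D^k
\]
for any $T$ diagonal with respect to the standard basis. In the notation of (\ref{diffoprep}) this means $Q_k(x) = \frac{g_k^*(-1)}{k!}\, x^k$, which is a scalar multiple of the monomial $x^k$. Hence each coefficient polynomial either vanishes identically (precisely when $g_k^*(-1)=0$) or has degree exactly $k$.

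Monotonicity is then immediate from this structural observation. Whenever $\deg Q_k \geq 0$ one has $\deg Q_k = k$; and for every later index $\ell > k$ at which $Q_\ell$ is nonzero, $\deg Q_\ell = \ell > k = \deg Q_k$. Thus, reading along the indices at which the coefficient polynomials are nonzero, the degrees form a strictly increasing sequence in the index, which is exactly the condition defining monotonicity.

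There is essentially no obstacle: Theorem \ref{thm:classicaldiag} does all of the work by pinning each $Q_k$ to a scalar multiple of $x^k$. The whole corollary therefore reduces to the single observation that a nonzero scalar multiple of $x^k$ has degree $k$; no recursion, no hyperbolicity preservation, and no case analysis are required beyond that. The only minor point of care is interpreting the monotonicity condition when intermediate $Q_k$'s happen to vanish, but since each nonzero $Q_k$ has degree equal to its own index, the degrees along the subsequence of nonzero coefficient polynomials are automatically non-decreasing.
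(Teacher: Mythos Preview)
Your argument is correct and matches the paper's proof essentially verbatim: both invoke Theorem~\ref{thm:classicaldiag} to identify $Q_k(x)=\dfrac{g_k^*(-1)}{k!}x^k$, observe that each $Q_k$ is either identically zero or has degree exactly $k$, and conclude monotonicity immediately from that.
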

\begin{proof} Given the representation in (\ref{eq:classicaldiag}), we see that $Q_k(x)=g_k^*(-1)x^k$. It is thus immediate that for every $k \geq 0$, either $Q_k(x) \equiv 0$, or $\deg Q_k(x)=k$. The result follows.
\end{proof}
\begin{theorem} \label{thm:affinetransform} Consider any affine transformation of the standard basis given by $q_k(x)=c_k(\alpha x + \beta)^k,$ where  $\{c_k\}_{k=0}^\infty$ is a sequence of non-zero real numbers, and $ \alpha,\beta\in\mathbb{R},~\alpha \neq 0.$ Suppose the linear operator $T$  is diagonal with respect to $\{q_n(x)\}_{n=0}^\infty$ with corresponding eigenvalues $\{\gamma_n\}_{n=0}^\infty$, so that
 \[
 T[q_n(x)]=\sum\limits_{k=0}^\infty Q_k(x)D^k[q_n(x)]=\gamma_nq_n(x) \qquad (n\in\mathbb{N}_0).
 \]
Then the polynomial coefficients of $T$ are given by
\begin{equation}\label{eq:affineqk}
Q_k(x)=\frac{(-1)^k(\alpha x + \beta)^k}{k!~a^k}\bigg(\gamma_0-\sum\limits_{j=1}^k\dbinom{k}{j}(-1)^{j+1}\gamma_j\bigg), \qquad k\in\mathbb{N}.
\end{equation}
Consequently, $T$ is monotone.
\end{theorem}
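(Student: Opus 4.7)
The plan is to reduce the statement to the already-established standard-basis case of Theorem~\ref{thm:classicaldiag} by means of a change of variable. Since $\alpha\neq 0$, the map $\sigma:\mathbb{R}[x]\to\mathbb{R}[y]$ given by $\sigma(p)(y)=p\!\bigl(\tfrac{y-\beta}{\alpha}\bigr)$ is a linear isomorphism, and I would form the conjugate operator $\widetilde T:=\sigma\circ T\circ\sigma^{-1}$ on $\mathbb{R}[y]$. By construction $\sigma(q_n)=c_n y^n$, and since the constants $c_n$ are nonzero, the diagonalization hypothesis $T[q_n]=\gamma_n q_n$ transfers to $\widetilde T[y^n]=\gamma_n y^n$. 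Hence $\widetilde T$ is diagonal with respect to the standard basis of $\mathbb{R}[y]$ with the same eigenvalue sequence $\{\gamma_n\}_{n=0}^\infty$.

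Applying Theorem~\ref{thm:classicaldiag} to $\widetilde T$ produces the representation
\[
\widetilde T=\sum_{k=0}^\infty \frac{g_k^*(-1)}{k!}\,y^k D_y^k,
\]
where $g_k^*$ is the reversed Jensen polynomial of $\{\gamma_k\}$. Undoing the conjugation via the substitutions $y=\alpha x+\beta$ and $D_y=\alpha^{-1}D_x$ then yields
\[
T=\sum_{k=0}^\infty \frac{g_k^*(-1)}{k!\,\alpha^k}(\alpha x+\beta)^k D_x^k,
\]
so $Q_k(x)=\dfrac{g_k^*(-1)}{k!\,\alpha^k}(\alpha x+\beta)^k$. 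A brief manipulation (using $(-1)^{k-j}=(-1)^{k+j}$ and pulling off the $j=0$ term) rewrites the scalar $g_k^*(-1)=\sum_{j=0}^k\binom{k}{j}(-1)^{k-j}\gamma_j$ in the form $(-1)^k\bigl(\gamma_0-\sum_{j=1}^k\binom{k}{j}(-1)^{j+1}\gamma_j\bigr)$, which matches (\ref{eq:affineqk}).

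Monotonicity is then immediate: because $\alpha\neq 0$, the factor $(\alpha x+\beta)^k$ has exact degree $k$, so each coefficient polynomial $Q_k$ is either identically zero or of degree exactly $k$; in particular $\deg Q_k\le \deg Q_{k+1}$ whenever $Q_{k+1}\not\equiv 0$. The only step requiring genuine care is the identity that conjugation by $\sigma$ intertwines $y^k D_y^k$ with $\alpha^{-k}(\alpha x+\beta)^k D_x^k$, which follows from a one-line chain-rule computation (or an easy induction on $k$), together with the algebraic check that the two expressions for the coefficient scalar agree. Neither constitutes a real obstacle, so I expect the proof to be essentially a change-of-variable followed by routine bookkeeping.
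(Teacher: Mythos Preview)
Your argument is correct, and it takes a genuinely different route from the paper. The paper proves the formula by direct verification: with the claimed $Q_k$ in hand it computes $\sum_{k=0}^n Q_k(x)D^k[q_n(x)]$, simplifies using $q_k q_{n-k}=\tfrac{c_kc_{n-k}}{c_n}q_n$, and then invokes the combinatorial identity $\sum_{k=0}^n\sum_{j=0}^k\binom{n}{k}\binom{k}{j}(-1)^{k-j}\gamma_j=\gamma_n$ (cited from Riordan) to conclude that the sum equals $\gamma_n q_n(x)$; uniqueness of the differential-operator representation then forces these $Q_k$ to be the right ones. Your conjugation approach is cleaner and more conceptual: it explains \emph{why} the formula is just the standard-basis formula of Theorem~\ref{thm:classicaldiag} with $y=\alpha x+\beta$ and $D_y=\alpha^{-1}D_x$, and it bypasses the double-sum identity entirely. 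The trade-off is that the paper's computation is self-contained, whereas yours relies on Theorem~\ref{thm:classicaldiag} as a black box; but since that theorem is already available, your reduction is the more economical proof.
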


\begin{proof}
 If $n=0,$ a direct calculation verifies that
\begin{equation}\label{1.2}
Q_0(x)=\gamma_0.
\end{equation}
Now let $n\geq 1.$ Then,  
\begin{align*}
\sum\limits_{k=0}^n Q_k(x)D^k[q_n(x)]&=\sum\limits_{k=0}^nQ_k(x)\left(\alpha^k \frac{n!}{(n-k)!}c_n(\alpha x+\beta)^{n-k}\right)\\
&=c_n\sum_{k=0}^nQ_k(x)\left(\frac{\alpha^k}{c_{n-k}}\frac{n!}{(n-k)!}q_{n-k}(x)\right)\\
&=c_n\sum_{k=0}^n\bigg[\frac{(-1)^kq_k(x)}{k!c_k\alpha^k}(\gamma_0-\sum_{j=1}^k\dbinom{k}{j}(-1)^{j+1}\gamma_j)\bigg]\bigg(\frac{\alpha^k}{c_{n-k}}\cdot\frac{n!}{(n-k)!}q_{n-k}(x)\bigg).\\
\end{align*}
Note that 
\[
q_k(x) q_{n-k}(x)=c_kc_{n-k}(\alpha x+\beta)^n=\frac{c_kc_{n-k}}{c_n}q_n(x) \qquad (n \in \mathbb{N}, 0\leq k \leq n).
\]
Thus,

\begin{eqnarray*}
\sum\limits_{k=0}^n Q_k(x)D^k[q_n(x)]&=&q_n(x)\bigg[\sum_{k=0}^n(-1)^k\dbinom{n}{k}\bigg(\gamma_0-\sum_{j=1}^k(-1)^{j+1}\gamma_j\bigg)\bigg]\\
&=&q_n(x)\bigg[\sum_{k=0}^n(-1)^k\dbinom{n}{k}\gamma_0-\sum_{k=0}^n\dbinom{n}{k}(-1)^k\sum_{j=1}^k\dbinom{k}{j}(-1)^{j+1}\gamma_j)\bigg]\\
&=&q_n(x)\bigg[-\sum\limits_{k=0}^n\dbinom{n}{k}(-1)^k\sum\limits_{j=0}^k\dbinom{k}{j}(-1)^{j+1}\gamma_j\bigg]\\
&=&q_n(x)\bigg[-\sum_{k=0}^n\sum\limits_{j=0}^k\dbinom{n}{k}\dbinom{k}{j}(-1)^k(-1)^{j+1}\gamma_j\bigg]\\
&=&q_n(x)\bigg[\sum_{k=0}^n\sum_{j=0}^k\dbinom{n}{k}\dbinom{k}{j}(-1)^{k-j}\gamma_j\bigg]\\
&=&\gamma_n q_n(x),
\end{eqnarray*}
where the last equality is a consequence of a straightforward combinatorial identity (see for example \cite[p.\,49]{riordan}). Finally, the fact that $T$ is monotone follows from the representation in (\ref{eq:affineqk}), since for each $k$, the polynomial $Q_k(x)$ is either identically zero, or of degree $k$. 
\end{proof}
The reader may wonder whether an operator being diagonal always implies its monotonicity, regardless of the basis in question. This turns out not to be the case. If an operator is diagonal with respect to a basis other than (an affine transformation of) the standard one, diagonality alone is not sufficient to ensure monotonicity.
We provide here two examples of linear operators, in order to demonstrate that diagonality (even with the additional requirement of hyperbolicity preservation) need not imply monotonicity.
\begin{example}\label{ex:notmonotone} Consider Hermite's differential equation $(D^2-2xD+\lambda)[y]=0$. The Hermite polynomials $\mathcal{H}_n$ are solutions to the equation with $\lambda=n$, and consequently the operator $T=2xD-D^2$ is Hermite diagonal. It is also hyperbolicity preserving, but is clearly not monotone. \\
The Legendre-diagonal operator $T$ with eigenvalues $\seq{k^2+\alpha k+\beta}$, $\alpha \neq 1$ is given by (see for example \cite[\S 5.2]{tominvolve})
\begin{eqnarray*}
T&=&\beta+(1+\alpha)xD-\left(\frac{2+\alpha-3x^2}{3}\right)D^2+\frac{2}{15}(\alpha-1)xD^3-\frac{(\alpha-1)(1+4x^2)}{105}D^4\\
&+&(\alpha-1)\sum_{k=5}^{\infty} T_k(x)D^k.
\end{eqnarray*}
Again we see that the operator is not monotone. 
\end{example}
In connection with Example \ref{ex:notmonotone} we direct the reader's attention to the following recent result of R. Bates:
\begin{corollary}(\cite[p.\,39]{bates}) \label{cor:bates} Let $T$ be a hyperbolicity preserving diagonal differential operator,
\[
T[B_n(x)]:=\left(\sum_{k=0}^{\infty} Q_k(x)D^k \right)B_n(x)=\gamma_n B_n(x), \qquad n   \in \mathbb{N}_0,
\]
where $0<\gamma_k \leq \gamma_{k+1}$ for every $k \in \mathbb{N}_0$. If $\seq{\gamma_k}$ can be interpolated by a polynomial of degree $n$, then $\deg Q_k(x)=k$ for $0 \leq k \leq n$. If $\seq{\gamma_k}$ cannot be interpolated by a polynomial, then $\deg Q_k(x)=k$ for all $k \in \mathbb{N}_0$.
\end{corollary}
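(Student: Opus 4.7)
The plan decouples the problem into an algebraic reduction and an analytic core. First, because $\{B_n(x)\}$ is a basis with $\deg B_n = n$ and each $\gamma_n$ is non-zero, $T$ preserves the filtration of $\mathbb{R}[x]$ by degree; in particular $\deg T[x^n] \leq n$, and the recurrence (\ref{eq:diffoprep}) then yields $\deg Q_k(x) \leq k$ for every $k$ by induction. Writing $a_k := [x^k]Q_k(x)$ (so $a_k = 0$ whenever $\deg Q_k < k$) and comparing coefficients of $x^n$ on both sides of $T[B_n] = \gamma_n B_n$ gives
\[
\gamma_n = \sum_{k=0}^{n} a_k\, (n)_k, \qquad n \in \mathbb{N}_0,
\]
where $(n)_k = n(n-1)\cdots(n-k+1)$ is the falling factorial. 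This is the Newton forward-difference expansion of $\{\gamma_k\}$, so $a_k = \Delta^k \gamma_0/k!$; consequently $\{\gamma_k\}$ is interpolated by a polynomial of degree $n$ if and only if the formal series $A(v) := \sum_k a_k v^k$ is a polynomial of degree exactly $n$, and is not polynomially interpolated if and only if $A$ has infinitely many non-zero Taylor coefficients. The corollary therefore reduces to proving that $a_k \neq 0$ throughout the degree range of $A$.

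A direct interchange of summations in Definition \ref{def:symbol} (using $D^k[z^n] = n!z^{n-k}/(n-k)!$) gives the factorization $G_T(z,w) = e^{-zw}\,F(z,-w)$, where $F(z,w) := \sum_k Q_k(z)\,w^k$. Since the infinite-dimensional range of $T$ excludes case (a) of Theorem \ref{bb}, either $G_T(z,w)$ or $G_T(z,-w)$ belongs to $\overline{\mathcal{H}}_2(\mathbb{R})$. For each fixed real $x$ the univariate restriction $w \mapsto G_T(x,w)$ is therefore non-vanishing on the open upper half-plane (directly in case (b), and by complex conjugation in case (c) using that $T$ has real coefficients), and hence belongs to the Laguerre--P\'olya class. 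Since $e^{-xw}$ is zero-free, the zeros of $G_T(x,w) = e^{-xw}F(x,-w)$ coincide with those of $F(x,-w)$, so $F(x,\cdot)$ lies in the Laguerre--P\'olya class as well. Rescaling $w \mapsto v/x$ preserves this property, and as $x \to \infty$ the series $F(x,v/x) = \sum_k \bigl(Q_k(x)/x^k\bigr)v^k$ converges to $A(v)$. Granted locally uniform convergence on compacta, Hurwitz's theorem places $A$ in the Laguerre--P\'olya class; the degenerate case $A \equiv 0$ is excluded by $a_0 = \gamma_0 > 0$.

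To pin down the signs of the zeros of $A$ I invoke strict positivity of the $\gamma_n$. Substituting the Newton expansion into the formal series $\phi(x) := \sum_n \gamma_n x^n/n!$ and interchanging summation gives $\phi(x) = e^x A(x)$; since $A$ has finite order, $\phi$ is entire and the identity holds pointwise. Termwise positivity $\gamma_n/n! > 0$ makes $\phi(x) > 0$ for $x > 0$, so neither $\phi$ nor $A$ vanishes on the positive real axis. Combined with $A(0) = \gamma_0 > 0$, the Weierstrass factorization produces
\[
A(v) = C\,e^{\sigma v}\prod_{i}(1 + \alpha_i v), \qquad C > 0,\ \sigma \geq 0,\ \alpha_i > 0,
\]
in which every Taylor coefficient is a sum of strictly positive terms. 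If $\{\gamma_k\}$ is interpolated by a polynomial of degree $n$ then $A$ is a polynomial of degree $n$, forcing $\sigma = 0$ and exactly $n$ factors, so $a_k > 0$ for $0 \leq k \leq n$. Otherwise either $\sigma > 0$ or there are infinitely many factors, and $a_k > 0$ for every $k \in \mathbb{N}_0$. In both scenarios this delivers $\deg Q_k(x) = k$ in the advertised range.

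The main obstacle I anticipate is justifying the locally uniform convergence of $F(x, v/x)$ to $A(v)$ on compact subsets of $\{\Im v > 0\}$ so that Hurwitz's theorem can be invoked. Termwise convergence is immediate from $\deg Q_k \leq k$, but in the infinite-order setting one needs uniform tail estimates on $\sum_{k > N}\bigl(Q_k(x)/x^k\bigr)v^k$. These must be extracted from explicit growth bounds on $Q_k(x)$ coming from the $\overline{\mathcal{H}}_2(\mathbb{R})$-membership of the symbol, and making this rigorous is the technical heart of the argument.
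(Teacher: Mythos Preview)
The paper does not prove this corollary; it is quoted from Bates' dissertation \cite{bates} solely as context for Example~\ref{ex:notmonotone}, so there is no proof in the paper to compare your attempt against. Your argument must therefore stand on its own.

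Your algebraic reduction is correct and useful: from $\deg B_n=n$ and $\gamma_n\neq 0$ one does get $\deg Q_k\le k$, and comparing leading coefficients yields $\gamma_n=\sum_{k\le n} a_k\,(n)_k$ with $a_k=\Delta^k\gamma_0/k!$, so the corollary is indeed equivalent to the non-vanishing of the appropriate $a_k$. The identity $G_T(z,w)=e^{-zw}F(z,-w)$ is also right, as is the conclusion that $w\mapsto F(x,w)$ lies in the Laguerre--P\'olya class for each real $x$.

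Beyond the locally-uniform-convergence gap you explicitly flag, there is a second gap you do not acknowledge. Even granting $A\in\mathcal{LP}$, $A(0)=\gamma_0>0$, and that $A$ has no positive real zeros, the factorisation you write,
\[
A(v)=C\,e^{\sigma v}\prod_i(1+\alpha_i v),\qquad C>0,\ \sigma\ge 0,\ \alpha_i>0,
\]
is \emph{not} the general Laguerre--P\'olya form: you have silently discarded a possible Gaussian factor $e^{-\tau v^2}$ and have not shown that the linear exponent is non-negative after absorbing the canonical convergence factors $e^{-\alpha_i v}$. The three properties you have established do not force this; for example $A(v)=(1+v)^2e^{-v/2}$ is in $\mathcal{LP}$, has $A(0)=1$, has only negative zeros, and yet $a_3<0$. (This particular $A$ corresponds to $\gamma_k=(4k^2+1)/2^k$, which is eventually decreasing, so it does not contradict the corollary---but that is precisely the point: nowhere in the analytic half of your argument do you invoke the monotonicity hypothesis $\gamma_k\le\gamma_{k+1}$, and something must force $A$ into the type~I subclass $\mathcal{LP}^+$.) Until you show how hyperbolicity preservation together with $\gamma_k\uparrow$ excludes a Gaussian factor and a negative linear exponent, the final positivity claim for the $a_k$ is unproved in the transcendental case.
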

Our first example illustrates that Bates' result is best possible, in that the eigenvalues of $T=1+2xD-D^2$ are interpolated by a polynomial of degree one, namely $p(x)=x+1$, but $\deg (-1)=0 \neq 2$.\\
In light of Example \ref{ex:notmonotone}, if we wish to ensure the monotonicity of a diagonal hyperbolicity preserver $T$, it must be an infinite order differential operator. Our final result deals with such a family of operators. In order to be able to state the result, we make the following definition.
\begin{definition}  The generalized Hermite polynomials with parameter $\alpha>0$ are defined as
\[
\mathcal{H}_n^{(\alpha)}(x) := (-\alpha)^n \exp(x^2/2\alpha) D^n \exp(-x^2/2\alpha) \qquad (n=0,1,2,\dots).
\]
\end{definition}
\begin{theorem} \label{thm:hermitediag} Let $\alpha >0$, and suppose that $T:\mathbb{R}[x] \to \mathbb{R}[x]$ is a linear operator that is diagonal with respect to a generalized Hermite basis $\left\{ \mathcal{H}^{(\alpha)}_n \right\}_{n=0}^{\infty}$, and is an infinite order differential operator. If $T$ is a hyperbolicity preserver, then $T$ is monotone. 
\end{theorem}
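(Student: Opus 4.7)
My strategy is to conjugate $T$ to a standard--basis--diagonal operator. Set $\Phi := e^{-\alpha D^2/2}$; a direct calculation shows $\Phi$ is invertible on $\mathbb{R}[x]$ with $\Phi[x^n] = \mathcal{H}_n^{(\alpha)}(x)$, both facts following from the formal identity $\Phi[e^{xt}] = e^{xt - \alpha t^2/2}$. Then $\tilde T := \Phi^{-1} T \Phi$ is diagonal with respect to the standard basis with the same eigenvalue sequence $\{\gamma_n\}$, so Theorem~\ref{thm:classicaldiag} yields $\tilde T = \sum_k (g_k^*(-1)/k!)\, x^k D^k$. From $[D^2, x] = 2D$ one obtains $\Phi\, x\, \Phi^{-1} = x - \alpha D$, while $\Phi$ commutes with $D$, and termwise conjugation produces
\[
T \;=\; \sum_{k=0}^{\infty} \frac{g_k^*(-1)}{k!}\, Y^k D^k, \qquad Y := x - \alpha D,
\]
where $Y^k$ is a noncommutative product in the Weyl algebra.

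Writing $Y^k = \sum_{j=0}^k c_{k,j}(x)\, D^j$ in normal form, the recursion $c_{k+1, j}(x) = x\, c_{k,j}(x) - \alpha\, c_{k,j}'(x) - \alpha\, c_{k, j-1}(x)$, obtained from $Y^{k+1} = (x - \alpha D)\, Y^k$ and $D \cdot c(x) = c(x)\, D + c'(x)$, gives by induction $\deg c_{k,j}(x) = k - j$ and leading coefficient of $c_{k,0}(x)$ equal to $1$. Collecting contributions to $D^m$ yields
\[
Q_m(x) \;=\; \sum_{k = \lceil m/2 \rceil}^{m} \frac{g_k^*(-1)}{k!}\, c_{k, m-k}(x),
\]
in which the $k$-th summand has degree $2k - m$. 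Hence the coefficient of $x^m$ in $Q_m(x)$ equals $g_m^*(-1)/m!$, so $\deg Q_m = m$ exactly when $g_m^*(-1) \neq 0$, and monotonicity follows immediately once we know $g_k^*(-1) \neq 0$ for every $k$.

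The remaining task is to use hyperbolicity preservation to establish non--vanishing of $g_k^*(-1)$. A generating--function identity gives $\sum_k (g_k^*(-1)/k!)\, z^k = e^{-z}\phi(z)$, where $\phi(u) := \sum_n \gamma_n u^n/n!$, so the question reduces to non--vanishing of Taylor coefficients of $e^{-z}\phi(z)$. Using the Hermite generating function $\sum_n \mathcal{H}_n^{(\alpha)}(z)\, t^n/n! = e^{zt - \alpha t^2/2}$ one computes
\[
G_T(z, -w) \;=\; e^{\alpha w^2/2}\,\sum_n \gamma_n\, \mathcal{H}_n^{(\alpha)}(z)\,\frac{w^n}{n!}.
\]
By Theorem~\ref{bb}, either $G_T(z, w)$ or $G_T(z, -w)$ lies in $\overline{\mathcal{H}}_2(\mathbb{R})$; the degenerate case (a) of Theorem~\ref{bb} must be excluded separately, since taking $\gamma_0 = \gamma_1 = 1$ and $\gamma_n = 0$ for $n \ge 2$ produces a case--(a) Hermite--diagonal, infinite--order hyperbolicity preserver with $Q_0 = 1$ and $Q_1 \equiv 0$, violating monotonicity. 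Given the symbol stability, I would exploit the $e^{\alpha w^2/2}$ factor together with standard properties of $\overline{\mathcal{H}}_2$ to force $e^{-z}\phi(z)$ into the positive Laguerre--P\'olya class; a product expansion then shows that all Taylor coefficients of $e^{-z}\phi(z)$ are strictly positive whenever $e^{-z}\phi(z)$ is not a polynomial, and the infinite--order hypothesis on $T$ is precisely that non--polynomiality.

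The principal obstacle will be this last extraction: translating two--variable stability of $G_T$ into the one--variable statement that $e^{-z}\phi(z)$, and not merely $\phi(z)$, lies in the positive Laguerre--P\'olya class. The weaker conclusion is genuinely insufficient, as $\phi(z) = 1 + z$ belongs to the positive Laguerre--P\'olya class while $e^{-z}(1 + z)$ has a vanishing $z^1$ coefficient. A possible alternative route is Corollary~\ref{cor:bates}, applied after first establishing that the eigenvalues $\{\gamma_n\}$ of every infinite--order, Hermite--diagonal hyperbolicity preserver are positive and nondecreasing; but that positivity and monotonicity of $\{\gamma_n\}$ itself seems to be close to the actual content of the theorem.
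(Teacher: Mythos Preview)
Your conjugation by $\Phi = e^{-\alpha D^2/2}$ is a valid alternative to the paper's direct citation of the explicit formula for $Q_k$ from \cite{tomandrzej1}; both routes reduce the problem to the same statement, namely that $\deg Q_m = m$ precisely when $g_m^*(-1) \neq 0$. So the structural half of your argument matches the paper.

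The genuine gap is exactly where you locate it: you do not establish that $g_k^*(-1) \neq 0$ for all $k$, and your proposed route via two--variable stability of $G_T$ and extraction of a one--variable Laguerre--P\'olya statement is left unfinished. The paper's argument at this point is short and avoids the symbol entirely. Assuming all $\gamma_k > 0$, the eigenvalue sequence is a non--decreasing Hermite (hence classical) multiplier sequence, and by \cite[Lemma~3]{tomandrzej1} the derived sequence $\{g_k^*(-1)\}$ is itself a classical multiplier sequence. Such sequences have interval support, so a single vanishing $g_n^*(-1) = 0$ (together with $g_0^*(-1) = \gamma_0 > 0$) forces $g_m^*(-1) = 0$ for all $m \geq n$; the explicit formula for $Q_k$ then gives $Q_k \equiv 0$ for all $k \geq 2n$, contradicting the infinite--order hypothesis. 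That multiplier--sequence lemma is the missing ingredient you should use in place of the symbol argument.

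Your degenerate example $\gamma_0 = \gamma_1 = 1$, $\gamma_n = 0$ for $n \geq 2$ is, however, a genuine counterexample to the theorem as literally stated. One computes $g_k^*(-1) = (-1)^k(1-k)$, so $Q_1 \equiv 0$ while $\deg Q_k = k$ for every $k \neq 1$: the operator is Hermite--diagonal, hyperbolicity--preserving (its range is spanned by $1$ and $x$), of infinite order, and not monotone. The paper's proof treats only the case of strictly positive eigenvalues, and the phrase ``assume without loss of generality that the eigenvalues of $T$ are all positive'' silently excludes precisely this situation; you have therefore identified a gap in the paper's argument as well as in your own.
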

\begin{proof} By Theorem 3 in \cite[p.\,465]{tomandrzej1}, the coefficient polynomials in the differential operator representation of $T$ are given by
\begin{equation} \label{eq:hermiteqk}
Q_k(x)=\sum_{j=0}^{\lfloor k/2 \rfloor} \frac{(-\alpha)^j}{j!(k-2j)!}g_{k-j}^*(-1) \mathcal{H}^{(\alpha)}_{k-2j}(x), \qquad (k=0,1,2,\ldots).
\end{equation} 
Assume without loss of generality, that the eigenvalues of $T$ are all positive. Since $T$ is a hyperbolicity preserver, its eigenvalues form an $\mathcal{H}^{(\alpha)}$-multiplier sequence, and hence are non-decreasing. It follows that the sequence $\seq{g_k^*(-1)}$ is a classical multiplier sequence (see for example \cite[Lemma 3, p.\,468]{tomandrzej1}). Consequently, if $g_n^*(-1)=0$ for some $n$, then $g_m^*(-1)=0$ for all $m \geq n$. Equation (\ref{eq:hermiteqk}) would then imply that $Q_k(x) \equiv 0$ for all $k\geq 2n$, and hence $T$ would be a finite order operator contrary to the assumptions of the theorem. We conclude that $g_k^*(-1) \neq 0$ for any $k$, and whence $\deg Q_k(x)=k$ for all $k \geq 0$. The proof is complete.
\end{proof}
\section{Open problems}
While Section \ref{s:2} provides an example of an infinite order hyperbolicity preserver which is not monotone, we wonder whether adding the assumption that $\dim \text{Range}(T) \geq 3$ would indeed be sufficient to ensure monotonicity of a linear operator:
\begin{problem} \label{prob:T*} Suppose that $\displaystyle{T=\sum_{k=0}^{\infty} Q_k(x)D^k}$ is an infinite order differential operator, such that $\dim \text{Range}(T) \geq 3$. Determine whether $T$ has to be monotone.
\end{problem}
We note that a possible approach to this problem could be based on the following reasoning. Given an operator $T=\sum Q_k(x) D^k$, set $T^*:=\sum Q_k^*(x)D^k$, where $Q_k^*(x)$ denotes the reverse of the polynomial $Q_k(x)$. If
\[
G_{T^*}(z,w)=\sum_{k=0}^{\infty} \frac{(-1)^k T^*[z^k]w^k}{k!} \in \overline{\mathcal{H}_2}(\mathbb{R}),
\]
then acting on $G_{T^*}(z,w)$ by the non-negative multiplier sequence $\{1,0,0,0, \ldots\}$ in the $z$ variable produces an element of the Laguerre-P\'olya class (and hence a real entire function with only real zeros). On the other hand,
\[
\{1,0,0,\ldots\}[G_{T^*}(z,w)]=\sum_{k=0}^{\infty} \frac{Q_k^*(0)}{k!}w^k.
\] 
Since the numbers $Q_k^*(0)$ are exactly the leading coefficients of the polynomials $Q_k(x)$, there is a connection between $T^*$ being a hyperbolicity preserver, and the degrees of the polynomials $Q_k(x)$. In order for this approach to produce and answer to Problem \ref{prob:T*}, one would have to show that if $T$ is reality preserving, so it $T^*$, and then understand the aforementioned connection precisely.\\
\indent The use of a generalized Hermite basis in Theorem \ref{thm:hermitediag} prompted the following problem concerning monotonicity of a diagonal hyperbolicity preserver.
\begin{problem}\label{prob:diffeq} Suppose that $\seq{B_k(x)}$ is a basis for $\mathbb{R}[x]$ such that $\deg B_k(x)=k$ for $k=0,1,2,\ldots$ and such that each $B_k(x)$ is solution to a finite (fixed) order differential equation 
\[
a_0(k)+\sum_{j=1}^M a_j(x) y^{(j)}=0, 
\]
where $a_0(k)$ is a constant depending on $k$. Determine whether or not an infinite order differential hyperbolicity preserving operator $T$ that is diagonal with respect to the basis $\seq{B_k(x)}$ must be monotone. 
\end{problem}
Though more general, Problem \ref{prob:diffeq} would settle the question whether operators, that are diagonal with respect to the classical orthogonal bases (and satisfy the additional hypotheses of Theorem \ref{thm:hermitediag}) are monotone. What made proving Theorem \ref{thm:hermitediag} possible was that we had an explicit expression of the coefficient polynomials $Q_k(x)$. To the best of our knowledge, at this time no explicit expressions are known for the coefficient polynomials of operators that are diagonal with respect to the other classical orthogonal bases. 
\\
%\noindent {\bf Acknowledgement.} \ We would like to thank the referee for a careful reading on the manuscript.
%%%%%%%%%%%REFERNECES%%%%%%%%%%%%%

\bigskip

\noindent ${}^{\star}$  Department of Mathematics\\
5245 N. Backer Ave, M/S PB 108\\
California State University, Fresno 93740-8001

\bigskip

\noindent ${}^{\dag}$  Department of Mathematics and Computer Science\\
Muskingum University\\
163 Stormont St.\\
New Concord, OH 43762

\bigskip

\noindent ${}^{\ddag}$  Department of Mathematics and Statistics\\
University of Wisconsin - La Crosse\\
1725 State St.\\
La Crosse, WI 54601

\end{document}